\newtheorem{theorem}{Theorem}
\theoremstyle{plain}
\newtheorem{definition}{Definition}
\numberwithin{equation}{section}
\begin{document}
\title[$L$-Lipschitzian functions ]{On the Hadamard's type inequalities for $%
L$-Lipschitzian mapping}
\author{Mehmet Zeki Sar\i kaya}
\address{Department of Mathematics, Faculty of Science and Arts, D\"{u}zce
University, D\"{u}zce, Turkey}
\email{sarikayamz@gmail.com}
\author{Hatice YALDIZ}
\email{yaldizhatice@gmail.com}
\date{}
\subjclass[2000]{26D15.}
\keywords{ convex function, co-ordinated convex mapping, Hermite-Hadamard
inequality and $L$-Lipschitzian.}

\begin{abstract}
In this paper, we establish some new inequalities of Hadamard's type for $L$%
-Lipschitzian mapping in two variables.
\end{abstract}

\maketitle

\section{Introduction}

Let $f:I\subseteq \mathbb{R\rightarrow R}$ be a convex mapping defined on
the interval $I$ of real numbers and $a,b\in I$, with $a<b.$ the following
double inequality is well known in the literature as the Hermite-Hadamard
inequality:%
\begin{equation*}
f\left( \frac{a+b}{2}\right) \leq \frac{1}{b-a}\int_{a}^{b}f\left( x\right)
dx\leq \frac{f\left( a\right) +f\left( b\right) }{2}.
\end{equation*}

Let us now consider a bidemensional interval $\Delta =:\left[ a,b\right]
\times \left[ c,d\right] $ in $\mathbb{R}^{2}$ with $a<b$ and $c<d$. A
mapping $f:\Delta \rightarrow \mathbb{R}$ is said to be convex on $\Delta $
if the following inequality:%
\begin{equation*}
f(tx+\left( 1-t\right) z,ty+\left( 1-t\right) w)\leq tf\left( x,y\right)
+\left( 1-t\right) f\left( z,w\right)
\end{equation*}%
holds, for all $\left( x,y\right) ,\left( z,w\right) \in \Delta $ and $t\in %
\left[ 0,1\right] .$A function $f:\Delta \rightarrow \mathbb{R}$ is said to
be on the co-ordinates on $\Delta $ if the partial mappings $f_{y}:\left[ a,b%
\right] \rightarrow \mathbb{R},$ \ $f_{y}\left( u\right) =f\left( u,y\right) 
$ and $f_{x}:\left[ c,d\right] \rightarrow \mathbb{R},$ \ $f_{x}\left(
v\right) =f\left( x,v\right) $ are convex where defined for all $x\in \left[
a,b\right] $ and $y\in \left[ c,d\right] \ $(see \cite{D}).

A formal definition for co-ordinated convex function may be stated as
follows:

\begin{definition}
A function $f:\Delta \rightarrow \mathbb{R}$ will be called co-ordinated
canvex on $\Delta $, for all $t,s\in \lbrack 0,1]$ and $(x,y),(u,w)\in
\Delta ,$if the following inequality holds:%
\begin{eqnarray}
&&f(tx+\left( 1-t\right) y,su+\left( 1-s\right) w)  \notag \\
&&  \notag \\
&\leq &tsf(x,u)+s(1-t)f(y,u)+t(1-s)f(x,w)+(1-t)(1-s)f(y,w).  \label{a}
\end{eqnarray}
\end{definition}

Clearly, every convex function is co-ordinated convex. Furthermore, there
exist co-ordinated convex function which is not convex, (see, \cite{D}). For
several recent results concerning Hermite-Hadamard's inequality for some
convex function on the co-ordinates on a rectangle from the plane $\mathbb{R}%
^{2},$ we refer the reader to (\cite{AD1}-\cite{D}, \cite{MAL1}, \cite{MAL2}%
, \cite{MEO}, \cite{Qi} and \cite{wu}).

Recently, in \cite{D}, Dragomir establish the following similar inequality
of Hadamard's type for co-ordinated convex mapping on a rectangle from the
plane $\mathbb{R}^{2}.$

\begin{theorem}
\label{t1} Suppose that $f:\Delta \rightarrow \mathbb{R}$ is co-ordinated
convex on $\Delta .\ $Then one has the inequalities:%
\begin{eqnarray*}
&&f\left( \dfrac{a+b}{2},\dfrac{c+d}{2}\right) \\
&\leq &\dfrac{1}{2}\left[ \dfrac{1}{b-a}\dint_{a}^{b}f\left( x,\dfrac{c+d}{2}%
\right) dx+\dfrac{1}{d-c}\dint_{c}^{d}f\left( \dfrac{a+b}{2},y\right) dy%
\right] \\
&\leq &\dfrac{1}{\left( b-a\right) \left( d-c\right) }\dint_{a}^{b}%
\dint_{c}^{d}f\left( x,y\right) dydx \\
&\leq &\dfrac{1}{4}\left[ \dfrac{1}{b-a}\dint_{a}^{b}f\left( x,c\right) dx+%
\dfrac{1}{b-a}\dint_{a}^{b}f\left( x,d\right) dx\right. \\
&&\left. +\dfrac{1}{d-c}\dint_{c}^{d}f\left( a,y\right) dy+\dfrac{1}{d-c}%
\dint_{c}^{d}f\left( b,y\right) dy\right] \\
&\leq &\dfrac{f\left( a,c\right) +f\left( a,d\right) +f\left( b,c\right)
+f\left( b,d\right) }{4}.
\end{eqnarray*}%
The above inequalities are sharp.
\end{theorem}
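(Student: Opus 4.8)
The plan is to derive the entire five-term chain from the classical one-variable Hermite--Hadamard inequality applied to suitable partial mappings of $f$. The key preliminary step I would record is that co-ordinated convexity forces those partial mappings to be convex: putting $u=w$ in \eqref{a} gives $f(tx+(1-t)y,u)\leq tf(x,u)+(1-t)f(y,u)$, so $f(\cdot,u)$ is convex on $[a,b]$ for every $u\in[c,d]$; putting $x=y$ gives $f(x,su+(1-s)w)\leq sf(x,u)+(1-s)f(x,w)$, so $f(v,\cdot)$ is convex on $[c,d]$ for every $v\in[a,b]$. With this in hand, each of the four inequalities in the statement will come from invoking the scalar inequality in one coordinate — at a midpoint, at an endpoint, or after integrating out the other variable — and then averaging two symmetric versions of the resulting estimate.

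For the first inequality I would apply the one-variable Hermite--Hadamard inequality to $x\mapsto f\!\left(x,\frac{c+d}{2}\right)$ on $[a,b]$ and to $y\mapsto f\!\left(\frac{a+b}{2},y\right)$ on $[c,d]$, giving
\[
f\!\left(\frac{a+b}{2},\frac{c+d}{2}\right)\leq \frac{1}{b-a}\int_a^b f\!\left(x,\frac{c+d}{2}\right)dx
\qquad\text{and}\qquad
f\!\left(\frac{a+b}{2},\frac{c+d}{2}\right)\leq \frac{1}{d-c}\int_c^d f\!\left(\frac{a+b}{2},y\right)dy ,
\]
and then averaging these two estimates yields precisely the first inequality.

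For the second and third inequalities I would fix $x\in[a,b]$, apply the scalar inequality to the convex map $y\mapsto f(x,y)$ on $[c,d]$, and then integrate in $x$ over $[a,b]$ and divide by $b-a$, obtaining
\[
\frac{1}{b-a}\int_a^b f\!\left(x,\frac{c+d}{2}\right)dx
\leq \frac{1}{(b-a)(d-c)}\int_a^b\!\!\int_c^d f(x,y)\,dy\,dx
\leq \frac{1}{2}\!\left[\frac{1}{b-a}\int_a^b f(x,c)\,dx+\frac{1}{b-a}\int_a^b f(x,d)\,dx\right].
\]
The symmetric computation (fix $y$, use convexity of $x\mapsto f(x,y)$, integrate in $y$) gives the companion chain with $\frac{1}{d-c}\int_c^d f\!\left(\frac{a+b}{2},y\right)dy$ on the left, with $\frac{1}{d-c}\int_c^d f(a,y)\,dy$ and $\frac{1}{d-c}\int_c^d f(b,y)\,dy$ on the right, and with the same double integral in the middle. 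Averaging the two chains term by term reproduces the second term of the theorem on the left, the double integral in the middle, and the fourth term on the right, which is exactly the second and third inequalities.

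For the last inequality I would apply the scalar Hermite--Hadamard upper bound to the four endpoint partial mappings, namely $\frac{1}{b-a}\int_a^b f(x,c)\,dx\leq \frac{f(a,c)+f(b,c)}{2}$ together with the three analogous estimates for $f(\cdot,d)$, $f(a,\cdot)$ and $f(b,\cdot)$; summing these and dividing by $4$ counts each corner value exactly twice and gives the right-hand side of the chain. Sharpness I would verify by testing any affine $f(x,y)=\alpha x+\beta y+\gamma$ (for instance $f\equiv 1$), for which every invocation of the one-variable inequality above is an equality, so that all five expressions coincide. I do not expect any genuinely hard step here; the main thing to watch is the bookkeeping of which coordinate is held at a midpoint, held at an endpoint, or integrated out, together with the repeated ``average the two symmetric versions'' device, and the one point that must not be skipped is the opening remark that \eqref{a} really does force all the partial mappings in play to be convex — once that is secured the rest is a mechanical iteration of the scalar inequality in each variable.
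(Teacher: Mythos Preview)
Your argument is correct and is essentially Dragomir's original proof: reduce everything to the one-variable Hermite--Hadamard inequality applied to the convex partial mappings $f(\cdot,y)$ and $f(x,\cdot)$, then average the two symmetric versions at each step. All four inequalities in the chain, and the sharpness check with an affine $f$, go through exactly as you describe.

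The paper, however, does \emph{not} reprove the full five-term chain of Theorem~\ref{t1}. What it actually proves in Section~2 is only the outer pair
\[
f\!\left(\tfrac{a+b}{2},\tfrac{c+d}{2}\right)\ \le\ \frac{1}{(b-a)(d-c)}\int_a^b\!\!\int_c^d f(x,y)\,dy\,dx\ \le\ \frac{f(a,c)+f(a,d)+f(b,c)+f(b,d)}{4},
\]
and it does so by a different device: it plugs carefully chosen points directly into the two-variable co-ordinated convexity inequality~\eqref{a} (taking $x=t_1a+(1-t_1)b$, $y=(1-t_1)a+t_1b$, $u=s_1c+(1-s_1)d$, $w=(1-s_1)c+s_1d$, $t=s=\tfrac12$ for the lower bound, and $x=a$, $y=b$, $u=c$, $w=d$ for the upper bound) and then integrates over $(t_1,s_1)\in[0,1]^2$. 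So the paper's route is self-contained --- it never invokes the scalar Hermite--Hadamard theorem --- but it pays for that by losing the two intermediate expressions involving $\int_a^b f(x,\tfrac{c+d}{2})\,dx$, $\int_c^d f(\tfrac{a+b}{2},y)\,dy$ and the four boundary integrals. Your partial-mapping approach recovers the full refinement at the cost of quoting the one-dimensional result.
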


\begin{definition}
Consider a function $f:V\rightarrow \mathbb{R}$ defined on a subset $V$ of $%
\mathbb{R}^{n},\ n\in \mathbb{N}$. Let $L=(L_{1},L_{2},...,L_{n})$ where $%
L_{i}\geq 0,\ i=1,2,...,n.$ We say that $f$ is $L$-Lipschitzian function if 
\begin{equation*}
\left\vert f(x)-f(y)\right\vert \leq \dsum\limits_{i=1}^{n}L\left\vert
x_{i}-y_{i}\right\vert
\end{equation*}%
for all $x,y\in V.$
\end{definition}

The authors in \cite{D1} and \cite{yang}\ have proved the following
inequalities of Hadamard's type for Lipschitzian mapping.

\begin{theorem}
Let $f:I\subset \mathbb{R}\rightarrow \mathbb{R}$ be $L$-Lipschitzian on $I$
and $a,b\in I$ with $a<b$. Then, we have%
\begin{equation*}
\left\vert \frac{f(a)+f(b)}{2}-\frac{1}{b-a}\int_{a}^{b}f\left( x\right)
dx\right\vert \leq \frac{L(b-a)}{3}
\end{equation*}%
and 
\begin{equation*}
\left\vert f\left( \frac{a+b}{2}\right) -\frac{1}{b-a}\int_{a}^{b}f\left(
x\right) dx\right\vert \leq \frac{L(b-a)}{4}.
\end{equation*}
\end{theorem}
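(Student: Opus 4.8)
The plan is to reduce both estimates to a single change of variables that rewrites the integral mean $\frac{1}{b-a}\int_a^b f(x)\,dx$ as an integral over $[0,1]$, and then to apply the $L$-Lipschitz bound pointwise in the new variable before integrating.

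First I would substitute $x=(1-t)a+tb$, $t\in[0,1]$, so that $dx=(b-a)\,dt$ and $\frac{1}{b-a}\int_a^b f(x)\,dx=\int_0^1 f((1-t)a+tb)\,dt$. For the trapezoid-type inequality I would also use the identity $\frac{f(a)+f(b)}{2}=\int_0^1\big[(1-t)f(a)+tf(b)\big]\,dt$, so that the quantity in question equals $\big|\int_0^1\big[(1-t)f(a)+tf(b)-f((1-t)a+tb)\big]\,dt\big|$. Writing the bracket as $(1-t)\big[f(a)-f((1-t)a+tb)\big]+t\big[f(b)-f((1-t)a+tb)\big]$ and noting that $|a-((1-t)a+tb)|=t(b-a)$ while $|b-((1-t)a+tb)|=(1-t)(b-a)$, the Lipschitz hypothesis and the triangle inequality give the pointwise estimate $\leq 2t(1-t)L(b-a)$. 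Since $\int_0^1 2t(1-t)\,dt=\tfrac13$, this yields the first inequality.

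For the midpoint-type inequality I would use the trivial identity $f\!\left(\frac{a+b}{2}\right)=\int_0^1 f\!\left(\frac{a+b}{2}\right)\,dt$, so the left-hand side equals $\big|\int_0^1\big[f(\tfrac{a+b}{2})-f((1-t)a+tb)\big]\,dt\big|$. The Lipschitz bound gives $\big|f(\tfrac{a+b}{2})-f((1-t)a+tb)\big|\leq L(b-a)\,\big|t-\tfrac12\big|$, and since $\int_0^1\big|t-\tfrac12\big|\,dt=\tfrac14$, the second inequality follows.

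There is no genuine obstacle here; the only step that needs a little care is the first one, where one must symmetrize the integrand so that the two separate Lipschitz estimates collapse into the single clean factor $2t(1-t)$ — a more naive bound would integrate to a constant larger than $\tfrac13$. Everything else is elementary integration, and if one wished, both cases could be subsumed in a single estimate of $\big|f(\lambda a+(1-\lambda)b)-\frac{1}{b-a}\int_a^b f\big|$ for general $\lambda\in[0,1]$, the two stated inequalities being the cases $\lambda=0$ (averaged with $\lambda=1$) and $\lambda=\tfrac12$.
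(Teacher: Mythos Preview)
The paper does not actually prove this one-variable statement; it is quoted as a known result from the references \cite{D1} and \cite{yang}, so there is no in-paper proof to compare against. Your argument is correct, and it is exactly the one-dimensional prototype of the method the paper later employs for its two-variable Theorem~\ref{z}: rewrite the endpoint average via the partition of unity $1=(1-t)+t$, split the integrand accordingly, apply the Lipschitz bound to each piece to obtain the factor $2t(1-t)$, and integrate; the midpoint case is handled by the direct bound $L(b-a)\lvert t-\tfrac12\rvert$.
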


For several recent results concerning Hadamard's type inequality for some $L$%
-Lipschitzian function$,$ we refer the reader to (\cite{D1}, \cite{matic}, 
\cite{yang}).

The main purpose of this paper is to establish some Hadamard's type
ineqaulities for $L$-Lipschitzian mapping in two variables.

\section{Hadamard's Type Inequalities}

Firstly, we will start the proof of the Theorem \ref{t1} by using the
definition of the co-ordinated convex functions as follows:

\begin{theorem}
Suppose that $f:\Delta \rightarrow \mathbb{R}$ is co-ordinated convex on $%
\Delta .\ $Then one has the inequalities:%
\begin{eqnarray}
f\left( \dfrac{a+b}{2},\dfrac{c+d}{2}\right) &\leq &\dfrac{1}{\left(
b-a\right) \left( d-c\right) }\dint_{a}^{b}\dint_{c}^{d}f\left( x,y\right)
dydx  \notag \\
&&  \label{a1} \\
&\leq &\dfrac{f\left( a,c\right) +f\left( a,d\right) +f\left( b,c\right)
+f\left( b,d\right) }{4}.  \notag
\end{eqnarray}
\end{theorem}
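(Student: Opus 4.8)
The plan is to apply the defining inequality \eqref{a} of co-ordinated convexity directly, in two specializations, and then integrate over the auxiliary parameters. Throughout I would use the affine substitution $x=ta+(1-t)b$, $y=sc+(1-s)d$ (and its reflections $x=(1-t)a+tb$, $y=(1-s)c+sd$), each of which transforms $\int_0^1\!\int_0^1 g\bigl(x(t),y(s)\bigr)\,dt\,ds$ into $\frac{1}{(b-a)(d-c)}\int_a^b\!\int_c^d g(x,y)\,dy\,dx$; this is the bridge between \eqref{a} and the double integral in \eqref{a1}.

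For the right-hand inequality in \eqref{a1}, I would put $x=a$, $y=b$, $u=c$, $w=d$ in \eqref{a}, obtaining for all $t,s\in[0,1]$
\[
f\bigl(ta+(1-t)b,\,sc+(1-s)d\bigr)\le ts\,f(a,c)+s(1-t)\,f(b,c)+t(1-s)\,f(a,d)+(1-t)(1-s)\,f(b,d).
\]
Integrating both sides over $(t,s)\in[0,1]^2$: the left side becomes $\frac{1}{(b-a)(d-c)}\int_a^b\!\int_c^d f(x,y)\,dy\,dx$ by the substitution above, while on the right each coefficient integrates to $\tfrac14$ since $\int_0^1 t\,dt=\int_0^1(1-t)\,dt=\tfrac12$ (and likewise in $s$). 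This yields the bound $\frac{f(a,c)+f(a,d)+f(b,c)+f(b,d)}{4}$.

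For the left-hand inequality, fix $t,s\in[0,1]$ and set $X_1=ta+(1-t)b$, $X_2=(1-t)a+tb$, $Y_1=sc+(1-s)d$, $Y_2=(1-s)c+sd$; note $X_1,X_2\in[a,b]$, $Y_1,Y_2\in[c,d]$, and $\tfrac12 X_1+\tfrac12 X_2=\tfrac{a+b}{2}$, $\tfrac12 Y_1+\tfrac12 Y_2=\tfrac{c+d}{2}$. Applying \eqref{a} with parameters $\tfrac12,\tfrac12$ to the endpoints $X_1,X_2$ (first coordinate) and $Y_1,Y_2$ (second coordinate) gives
\[
f\!\left(\frac{a+b}{2},\frac{c+d}{2}\right)\le \frac14\Bigl[f(X_1,Y_1)+f(X_2,Y_1)+f(X_1,Y_2)+f(X_2,Y_2)\Bigr].
\]
Integrating over $(t,s)\in[0,1]^2$ and using that each of the four double integrals on the right equals $\frac{1}{(b-a)(d-c)}\int_a^b\!\int_c^d f(x,y)\,dy\,dx$ (the reflected arguments $X_2,Y_2$ produce the same value as $X_1,Y_1$), the right-hand side collapses to exactly that double integral, which is the claimed lower bound.

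I expect the only delicate point to be the bookkeeping in the lower bound: one must symmetrize the parametrization (using each of $ta+(1-t)b$ and $(1-t)a+tb$, and similarly in $s$) so that the four interior midpoints average precisely to $\bigl(\tfrac{a+b}{2},\tfrac{c+d}{2}\bigr)$, and then verify that all four resulting integrals reduce to the same average of $f$ over $\Delta$. Everything else is a routine Fubini-plus-change-of-variables computation. As a sanity check one can test $f(x,y)=x+y$, for which all three quantities in \eqref{a1} coincide, so the inequalities cannot be improved in general.
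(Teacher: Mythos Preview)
Your proposal is correct and follows essentially the same approach as the paper: for the upper bound you specialize \eqref{a} at $(x,y,u,w)=(a,b,c,d)$ and integrate over $(t,s)\in[0,1]^2$, and for the lower bound you specialize \eqref{a} at $t=s=\tfrac12$ with the symmetrized endpoints $X_1,X_2,Y_1,Y_2$ and integrate, exactly as the paper does (with your $X_i,Y_j$ corresponding to the paper's $t_1a+(1-t_1)b$, $(1-t_1)a+t_1b$, $s_1c+(1-s_1)d$, $(1-s_1)c+s_1d$). The only differences are expository: you treat the upper bound first and add the affine sanity check $f(x,y)=x+y$.
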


\begin{proof}
According to (\ref{a}) with $x=t_{1}a+(1-t_{1})b$, $y=(1-t_{1})a+t_{1}b$, $%
u=s_{1}c+(1-s_{1})d,\ w=(1-s_{1})c+s_{1}d\ $and $t=s=\frac{1}{2},$ we find
that%
\begin{eqnarray*}
&&f\left( \dfrac{a+b}{2},\dfrac{c+d}{2}\right) \\
&& \\
&\leq &\frac{1}{4}\left[
f(t_{1}a+(1-t_{1})b,s_{1}c+(1-s_{1})d)+f((1-t_{1})a+t_{1}b,s_{1}c+(1-s_{1})d)\right.
\\
&& \\
&&\left.
+f(t_{1}a+(1-t_{1})b,(1-s_{1})c+s_{1}d)+f(t_{1}a+(1-t_{1})b,(1-s_{1})c+s_{1}d) 
\right] .
\end{eqnarray*}%
Thus, by integrating with respect to $t_{1},s_{1}$ on $[0,1]\times \lbrack
0,1]$, we obtain%
\begin{eqnarray*}
&&f\left( \dfrac{a+b}{2},\dfrac{c+d}{2}\right) \\
&& \\
&\leq &\frac{1}{4}\left[ \dint_{0}^{1}\dint_{0}^{1}\left[ f\left(
ta+(1-t)b,sc+(1-s)d\right) +f\left( ta+(1-t)b,(1-s)c+sd\right) \right]
dsdt\right. \\
&& \\
&&\left. +\dint_{0}^{1}\dint_{0}^{1}\left[ f\left(
(1-t)a+tb,sc+(1-s)d\right) +f\left( (1-t)a+tb,(1-s)c+sd\right) \right] dsdt%
\right] .
\end{eqnarray*}%
Using the change of the variable, we get%
\begin{equation}
f\left( \dfrac{a+b}{2},\dfrac{c+d}{2}\right) \leq \dfrac{1}{\left(
b-a\right) \left( d-c\right) }\dint_{a}^{b}\dint_{c}^{d}f\left( x,y\right)
dydx,  \label{a2}
\end{equation}%
which the first inequality is proved. The proof of the second inequality
follows by using (\ref{a}) with $x=a,$ $y=b,\ u=c$ and $w=d,$ and
integrating with respect to $t,s$ over $[0,1]\times \lbrack 0,1],$%
\begin{eqnarray*}
&&\dint_{0}^{1}\dint_{0}^{1}f\left( ta+(1-t)b,sc+(1-s)d\right) dsdt \\
&& \\
&\leq &\dint_{0}^{1}\dint_{0}^{1}\left[
tsf(a,c)+s(1-t)f(b,c)+t(1-s)f(a,d)+(1-t)(1-s)f(b,d)\right] dsdt \\
&& \\
&=&\dfrac{f\left( a,c\right) +f\left( a,d\right) +f\left( b,c\right)
+f\left( b,d\right) }{4}.
\end{eqnarray*}%
Here, using the change of the variable $x=ta+(1-t)b$ and $y=sc+(1-s)d$ for $%
s,t\in \lbrack 0,1],$ we have%
\begin{equation}
\dfrac{1}{\left( b-a\right) \left( d-c\right) }\dint_{a}^{b}\dint_{c}^{d}f%
\left( x,y\right) dydx\leq \dfrac{f\left( a,c\right) +f\left( a,d\right)
+f\left( b,c\right) +f\left( b,d\right) }{4}.  \label{a3}
\end{equation}%
We get the inequality (\ref{a1}) from (\ref{a2}) and (\ref{a3}). The proof
is complete.
\end{proof}

\begin{theorem}
\label{z} Let $f:\Delta \subset \mathbb{R}^{2}\rightarrow \mathbb{R}$
satisfy $L$-Lipschitzian conditions. That is, for $(t_{1},s_{1})$ and $%
(t_{2},s_{2})$ belong to $\Delta :=\left[ a,b\right] \times \left[ c,d\right]
,$ then we have%
\begin{equation*}
\left\vert f(t_{1},s_{1})-f(t_{2},s_{2})\right\vert \leq L_{1}\left\vert
t_{1}-t_{2}\right\vert +L_{2}\left\vert s_{1}-s_{2}\right\vert
\end{equation*}%
where $L_{1}$ and $L_{2}$ are positive constants. Then, we have the
following inequalities:%
\begin{equation}
\left\vert f\left( \dfrac{a+b}{2},\dfrac{c+d}{2}\right) -\dfrac{1}{\left(
b-a\right) \left( d-c\right) }\dint_{a}^{b}\dint_{c}^{d}f\left( x,y\right)
dydx\right\vert \leq \dfrac{1}{16}\left( M_{1}\left\vert b-a\right\vert
+M_{2}\left\vert d-c\right\vert \right)  \label{1}
\end{equation}%
\begin{eqnarray}
&&\left\vert \dfrac{f\left( a,c\right) +f\left( a,d\right) +f\left(
b,c\right) +f\left( b,d\right) }{4}-\dfrac{1}{\left( b-a\right) \left(
d-c\right) }\dint_{a}^{b}\dint_{c}^{d}f\left( x,y\right) dydx\right\vert 
\notag \\
&&  \notag \\
\ \ \ \ \ \ &\leq &\dfrac{1}{12}\left( M_{1}\left\vert b-a\right\vert
+M_{2}\left\vert d-c\right\vert \right)  \label{11}
\end{eqnarray}%
where $M_{1}=\left[ L_{1}+L_{3}+L_{5}+L_{7}\right] $ and $M_{2}=\left[
L_{2}+L_{4}+L_{6}+L_{8}\right] .$
\end{theorem}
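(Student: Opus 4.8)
The plan is to carry everything over to the unit square by the linear substitution $x=ta+(1-t)b$, $y=sc+(1-s)d$ with $t,s\in[0,1]$, under which the double mean becomes
\[
\frac{1}{(b-a)(d-c)}\int_{a}^{b}\int_{c}^{d}f(x,y)\,dy\,dx=\int_{0}^{1}\int_{0}^{1}f\bigl(ta+(1-t)b,\,sc+(1-s)d\bigr)\,ds\,dt.
\]
One preliminary remark makes both estimates work: the reflections $t\mapsto 1-t$ and $s\mapsto 1-s$ leave this integral unchanged, so it equals $\int_{0}^{1}\int_{0}^{1}f\bigl(P_{j}(t,s)\bigr)\,ds\,dt$ for each of the four ``reflected'' arguments
\[
P_{1}=(ta+(1-t)b,\,sc+(1-s)d),\qquad P_{2}=((1-t)a+tb,\,sc+(1-s)d),
\]
\[
P_{3}=(ta+(1-t)b,\,(1-s)c+sd),\qquad P_{4}=((1-t)a+tb,\,(1-s)c+sd);
\]
moreover $|t-\tfrac12|$ and $|s-\tfrac12|$ are unchanged under these reflections, while the first coordinate of each $P_{j}$ differs from $\tfrac{a+b}{2}$ by exactly $|b-a|\,|t-\tfrac12|$ and the second from $\tfrac{c+d}{2}$ by $|d-c|\,|s-\tfrac12|$.

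To prove (\ref{1}) I would express $f\left(\tfrac{a+b}{2},\tfrac{c+d}{2}\right)$ as $\tfrac14$ of four equal copies of itself and subtract the double integral written four times, one copy against each $P_{j}$, obtaining
\[
f\left(\tfrac{a+b}{2},\tfrac{c+d}{2}\right)-\frac{1}{(b-a)(d-c)}\int_{a}^{b}\int_{c}^{d}f=\frac14\int_{0}^{1}\int_{0}^{1}\sum_{j=1}^{4}\Bigl[f\left(\tfrac{a+b}{2},\tfrac{c+d}{2}\right)-f\bigl(P_{j}(t,s)\bigr)\Bigr]\,ds\,dt.
\]
The triangle inequality together with the $L$-Lipschitzian hypothesis bounds the $j$-th summand by $L_{2j-1}|b-a|\,|t-\tfrac12|+L_{2j}|d-c|\,|s-\tfrac12|$ (this is where the eight constants of the statement enter, one pair per reflected term). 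Integrating over $[0,1]^{2}$ and using $\int_{0}^{1}|t-\tfrac12|\,dt=\int_{0}^{1}|s-\tfrac12|\,ds=\tfrac14$ gives the bound $\tfrac14\cdot\tfrac14\bigl(M_{1}|b-a|+M_{2}|d-c|\bigr)$, which is exactly the right-hand side of (\ref{1}).

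For (\ref{11}) the four corners play the role of the midpoint. Since $\int_{0}^{1}\int_{0}^{1}ts\,ds\,dt=\int_{0}^{1}\int_{0}^{1}s(1-t)\,ds\,dt=\int_{0}^{1}\int_{0}^{1}t(1-s)\,ds\,dt=\int_{0}^{1}\int_{0}^{1}(1-t)(1-s)\,ds\,dt=\tfrac14$, we may write
\[
\frac{f(a,c)+f(a,d)+f(b,c)+f(b,d)}{4}=\int_{0}^{1}\int_{0}^{1}\bigl[ts\,f(a,c)+s(1-t)f(b,c)+t(1-s)f(a,d)+(1-t)(1-s)f(b,d)\bigr]\,ds\,dt,
\]
and, since the four weights sum to $1$, subtracting $\int_{0}^{1}\int_{0}^{1}f\bigl(P_{1}(t,s)\bigr)\,ds\,dt$ recasts the difference as
\[
\int_{0}^{1}\int_{0}^{1}\bigl[ts\,(f(a,c)-f(P_{1}))+s(1-t)(f(b,c)-f(P_{1}))+t(1-s)(f(a,d)-f(P_{1}))+(1-t)(1-s)(f(b,d)-f(P_{1}))\bigr]\,ds\,dt.
\]
Bounding the four differences by the Lipschitz condition, namely $|f(a,c)-f(P_{1})|\leq L_{1}|b-a|(1-t)+L_{2}|d-c|(1-s)$, $|f(b,c)-f(P_{1})|\leq L_{3}|b-a|t+L_{4}|d-c|(1-s)$, $|f(a,d)-f(P_{1})|\leq L_{5}|b-a|(1-t)+L_{6}|d-c|s$ and $|f(b,d)-f(P_{1})|\leq L_{7}|b-a|t+L_{8}|d-c|s$, and integrating term by term with $\int_{0}^{1}t(1-t)\,dt=\tfrac16$ and $\int_{0}^{1}t\,dt=\int_{0}^{1}(1-t)\,dt=\tfrac12$, every $L_{i}$ acquires the common coefficient $\tfrac16\cdot\tfrac12=\tfrac1{12}$, so the bound collapses to $\tfrac1{12}\bigl(M_{1}|b-a|+M_{2}|d-c|\bigr)$, which is (\ref{11}).

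I do not expect a genuine obstacle here: this is the two-variable, Lipschitzian analogue of the midpoint and trapezoid estimates recalled in the Introduction. The only points that need care are the justification of the reflection invariance of the double integral (Fubini plus the linear change of variables), the invariance of $|t-\tfrac12|$ under $t\mapsto 1-t$, and --- the main piece of bookkeeping --- choosing the index labels $L_{1},\dots,L_{8}$ consistently so that the eight one-dimensional integrals appearing in the proof of (\ref{11}) regroup cleanly into $M_{1}$ and $M_{2}$.
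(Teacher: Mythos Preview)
Your argument is correct and follows essentially the same strategy as the paper: pass to the unit square via $x=ta+(1-t)b$, $y=sc+(1-s)d$, exploit the reflection symmetry $t\mapsto 1-t$, $s\mapsto 1-s$, bound the four pointwise differences by the Lipschitz hypothesis, and integrate using $\int_{0}^{1}|t-\tfrac12|\,dt=\tfrac14$ and $\int_{0}^{1}t(1-t)\,dt=\tfrac16$. The only organisational difference is in (\ref{1}): the paper first records the corner--midpoint bound $\bigl|\tfrac14\sum f(\text{corners})-f(\tfrac{a+b}{2},\tfrac{c+d}{2})\bigr|\le\tfrac18(M_1|b-a|+M_2|d-c|)$, then substitutes $a\mapsto ta+(1-t)b$, $b\mapsto(1-t)a+tb$, etc., and integrates $|1-2t|$, whereas you bypass that detour and apply the Lipschitz estimate directly to each $|f(\tfrac{a+b}{2},\tfrac{c+d}{2})-f(P_j)|$; the two routes yield the same integrand and the same constant.
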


\begin{proof}
Let $t,s\in \left[ 0,1\right] .$ Since $ts+s(1-t)+t(1-s)+(1-t)(1-s)=1,$ then
we have%
\begin{eqnarray}
&&\left\vert tsf(a,c)+s(1-t)f(b,c)+t(1-s)f(a,d)+(1-t)(1-s)f(b,d)\right. 
\notag \\
&&  \notag \\
&&\left. -f\left( ta+(1-t)b,sc+(1-s)d\right) \right\vert  \notag \\
&&  \notag \\
&=&\left\vert ts\left[ f(a,c)-f\left( ta+(1-t)b,sc+(1-s)d\right) \right]
\right.  \notag \\
&&  \label{2} \\
&&+s(1-t)\left[ f(b,c)-f\left( ta+(1-t)b,sc+(1-s)d\right) \right]  \notag \\
&&  \notag \\
&&+t(1-s)\left[ f(a,d)-f\left( ta+(1-t)b,sc+(1-s)d\right) \right]  \notag \\
&&  \notag \\
&&\left. +(1-t)(1-s)\left[ f(b,d)-f\left( ta+(1-t)b,sc+(1-s)d\right) \right]
\right\vert  \notag \\
&&  \notag \\
&\leq &ts\left[ (1-t)L_{1}\left\vert b-a\right\vert +(1-s)L_{2}\left\vert
d-c\right\vert \right] +s(1-t)\left[ tL_{3}\left\vert b-a\right\vert
+(1-s)L_{4}\left\vert d-c\right\vert \right]  \notag \\
&&  \notag \\
&&+t(1-s)\left[ (1-t)L_{5}\left\vert b-a\right\vert +sL_{6}\left\vert
d-c\right\vert \right] +(1-t)(1-s)\left[ tL_{7}\left\vert b-a\right\vert
+sL_{8}\left\vert d-c\right\vert \right]  \notag \\
&&  \notag \\
&=&\left( ts(1-t)\left[ L_{1}+L_{3}\right] +t(1-s)(1-t)\left[ L_{5}+L_{7}%
\right] \right) \left\vert b-a\right\vert  \notag \\
&&  \notag \\
&&+\left( ts(1-s)\left[ L_{2}+L_{6}\right] +s(1-s)(1-t)\left[ L_{4}+L_{8}%
\right] \right) \left\vert d-c\right\vert .  \notag
\end{eqnarray}%
If we choose $t=s=\frac{1}{2}$ in (\ref{2}), we get%
\begin{eqnarray}
&&\left\vert \dfrac{f\left( a,c\right) +f\left( a,d\right) +f\left(
b,c\right) +f\left( b,d\right) }{4}-f\left( \dfrac{a+b}{2},\dfrac{c+d}{2}%
\right) \right\vert  \notag \\
&&  \label{3} \\
&\leq &\frac{1}{8}\left( \left[ L_{1}+L_{3}+L_{5}+L_{7}\right] \left\vert
b-a\right\vert +\left[ L_{2}+L_{6}+L_{4}+L_{8}\right] \left\vert
d-c\right\vert \right) .  \notag
\end{eqnarray}%
Thus, if we put $ta+(1-t)b$ instead of $a,\ (1-t)a+tb$ instead of $b,\
sc+(1-s)d$ instead of $c$ and $(1-s)c+sd$ instead of $d$ in (\ref{3}),
respectively, then it follows that%
\begin{eqnarray}
&&\left\vert \dfrac{f\left( ta+(1-t)b,sc+(1-s)d\right) +f\left(
ta+(1-t)b,(1-s)c+sd\right) }{4}\right.  \notag \\
&&  \notag \\
&&+\dfrac{f\left( (1-t)a+tb,sc+(1-s)d\right) +f\left(
(1-t)a+tb,(1-s)c+sd\right) }{4}  \notag \\
&&  \label{4} \\
&&\left. -f\left( \dfrac{a+b}{2},\dfrac{c+d}{2}\right) \right\vert  \notag \\
&&  \notag \\
&\leq &\frac{1}{8}\left( \left[ L_{1}+L_{3}+L_{5}+L_{7}\right] \left\vert
1-2t\right\vert \left\vert b-a\right\vert +\left[ L_{2}+L_{6}+L_{4}+L_{8}%
\right] \left\vert 1-2s\right\vert \left\vert d-c\right\vert \right)  \notag
\end{eqnarray}%
for all $t,s\in \left[ 0,1\right] $. If we integrate the inequality (\ref{4}%
) with respect to $s,t$ on $\left[ 0,1\right] \times \left[ 0,1\right] $ 
\begin{eqnarray*}
&&\left\vert \frac{1}{4}\dint_{0}^{1}\dint_{0}^{1}\left[ f\left(
ta+(1-t)b,sc+(1-s)d\right) +f\left( ta+(1-t)b,(1-s)c+sd\right) \right]
dsdt\right. \\
&& \\
&&+\frac{1}{4}\dint_{0}^{1}\dint_{0}^{1}\left[ f\left(
(1-t)a+tb,sc+(1-s)d\right) +f\left( (1-t)a+tb,(1-s)c+sd\right) \right] dsdt
\\
&& \\
&&\left. -f\left( \dfrac{a+b}{2},\dfrac{c+d}{2}\right) \right\vert \\
&& \\
&\leq &\frac{1}{8}\left\{ \left[ L_{1}+L_{3}+L_{5}+L_{7}\right] \left\vert
b-a\right\vert \dint_{0}^{1}\dint_{0}^{1}\left\vert 1-2t\right\vert
dsdt\right. \\
&& \\
&&\left. +\left[ L_{2}+L_{6}+L_{4}+L_{8}\right] \left\vert d-c\right\vert
\dint_{0}^{1}\dint_{0}^{1}\left\vert 1-2s\right\vert dsdt\right\} .
\end{eqnarray*}%
Thus, using the change of the variable $x=ta+(1-t)b$, $y=(1-t)a+tb,\
u=sc+(1-s)d$ and $w=(1-s)c+sd$ for $t,s\in \lbrack 0,1]$, and 
\begin{equation*}
\dint_{0}^{1}\dint_{0}^{1}\left\vert 1-2t\right\vert
dsdt=\dint_{0}^{1}\dint_{0}^{1}\left\vert 1-2s\right\vert dsdt=\frac{1}{2}
\end{equation*}%
we obtain the inequality (\ref{1}).

Note that, by the inequality (\ref{2}), we write%
\begin{eqnarray}
&&\left\vert tsf(a,c)+s(1-t)f(b,c)+t(1-s)f(a,d)+(1-t)(1-s)f(b,d)\right. 
\notag \\
&&  \notag \\
&&\left. -f\left( ta+(1-t)b,sc+(1-s)d\right) \right\vert  \notag \\
&&  \label{5} \\
&\leq &\left( ts(1-t)\left[ L_{1}+L_{3}\right] +t(1-s)(1-t)\left[ L_{5}+L_{7}%
\right] \right) \left\vert b-a\right\vert  \notag \\
&&  \notag \\
&&+\left( ts(1-s)\left[ L_{2}+L_{6}\right] +s(1-s)(1-t)\left[ L_{4}+L_{8}%
\right] \right) \left\vert d-c\right\vert .  \notag
\end{eqnarray}%
for all $t,s\in \left[ 0,1\right] $. If we integrate the inequality (\ref{5}%
) with respect to $s,t$ on $\left[ 0,1\right] \times \left[ 0,1\right] ,$ we
have%
\begin{eqnarray*}
&&\left\vert \dfrac{f\left( a,c\right) +f\left( a,d\right) +f\left(
b,c\right) +f\left( b,d\right) }{4}-\dfrac{1}{\left( b-a\right) \left(
d-c\right) }\dint_{a}^{b}\dint_{c}^{d}f\left( x,y\right) dydx\right\vert \\
&& \\
&\leq &\frac{1}{12}\left( \left[ L_{1}+L_{3}+L_{5}+L_{7}\right] \left\vert
b-a\right\vert +\left[ L_{2}+L_{6}+L_{4}+L_{8}\right] \left\vert
d-c\right\vert \right)
\end{eqnarray*}%
and so we have the inequality (\ref{11}), where we use the fact that%
\begin{equation*}
\dint\limits_{0}^{1}\dint\limits_{0}^{1}st(1-t)dsdt=\dint\limits_{0}^{1}%
\dint\limits_{0}^{1}s(1-s)(1-t)dsdt=\frac{1}{12}.
\end{equation*}%
This completes the proof.
\end{proof}

\section{The Mapping $H$}

For a $L$-Lipschitzian function $f:\Delta \subset \mathbb{R}^{2}\rightarrow 
\mathbb{R}$, we can define a mapping $H:\left[ 0,1\right] \times \left[ 0,1%
\right] \rightarrow \mathbb{R}$ by%
\begin{equation*}
H(t,s):=\dfrac{1}{\left( b-a\right) \left( d-c\right) }\dint_{a}^{b}%
\dint_{c}^{d}f\left( tx+(1-t)\frac{a+b}{2},sy+(1-s)\frac{c+d}{2}\right) dydx.
\end{equation*}%
Now, we give some properties of this mapping as follows:

\begin{theorem}
\label{s} Suppose that $f:\Delta \subset \mathbb{R}^{2}\rightarrow \mathbb{R}
$ be $L$-Lipschitzian on $\Delta :=\left[ a,b\right] \times \left[ c,d\right]
$. Then:
\end{theorem}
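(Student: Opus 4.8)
I expect Theorem \ref{s} to collect the basic structural properties of the auxiliary mapping $H$: the identification of its diagonal corner values with the midpoint value $f\left( \frac{a+b}{2},\frac{c+d}{2} \right)$ at $(0,0)$ and with the double mean $\frac{1}{(b-a)(d-c)}\int_a^b\int_c^d f$ at $(1,1)$; a Lipschitz-type estimate for $H$ itself on $[0,1]^{2}$; and consequent error bounds comparing $H(t,s)$ with those two corner values. The plan is to reduce everything to the defining integral of $H$ together with the single elementary computation $\frac{1}{b-a}\int_a^b \left| x - \frac{a+b}{2} \right| dx = \frac{b-a}{4}$ and its analogue in $y$. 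No part of this requires convexity; only the Lipschitz hypothesis of the form $\left| f(u_1,v_1)-f(u_2,v_2) \right| \le L_1\left| u_1-u_2 \right| + L_2\left| v_1-v_2 \right|$ is used.

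First I would establish the corner values by unpacking the definition: setting $t=s=0$ collapses the inner argument to the constant $\left( \frac{a+b}{2},\frac{c+d}{2} \right)$, so $H(0,0)=f\left( \frac{a+b}{2},\frac{c+d}{2} \right)$, while $t=s=1$ turns the integrand into $f(x,y)$, so $H(1,1)=\frac{1}{(b-a)(d-c)}\int_a^b\int_c^d f(x,y)\,dy\,dx$. These are immediate.

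Next, for the Lipschitz estimate I would write $H(t_1,s_1)-H(t_2,s_2)$ as the average over $\Delta$ of the difference of the two inner values and apply the hypothesis to the integrand, noting that the two arguments differ by $(t_1-t_2)\left( x - \frac{a+b}{2} \right)$ in the first slot and $(s_1-s_2)\left( y - \frac{c+d}{2} \right)$ in the second. Pulling out the constants $\left| t_1-t_2 \right|$ and $\left| s_1-s_2 \right|$ and integrating with the identity above gives
\[
\left| H(t_1,s_1)-H(t_2,s_2) \right| \le \frac{L_1(b-a)}{4}\left| t_1-t_2 \right| + \frac{L_2(d-c)}{4}\left| s_1-s_2 \right|,
\]
so $H$ is itself $L$-Lipschitzian on the unit square with explicit constants. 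Specializing $(t_2,s_2)$ to $(0,0)$ and to $(1,1)$ and invoking the corner-value computations then yields $\left| H(t,s) - f\left( \frac{a+b}{2},\frac{c+d}{2} \right) \right| \le \frac{t L_1 (b-a)}{4} + \frac{s L_2 (d-c)}{4}$ and $\left| H(1,1) - H(t,s) \right| \le \frac{(1-t)L_1(b-a)}{4} + \frac{(1-s)L_2(d-c)}{4}$, and in particular the $t=s=1$, resp. $t=s=0$, specializations recover the estimate \eqref{1}.

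There is no serious obstacle: the whole argument is "estimate under the integral sign", and the only quantitative input is the $L^{1}$-distance of a uniform point of $[a,b]$ to its midpoint. The one point that needs a word of care is that the inner arguments $tx+(1-t)\frac{a+b}{2}$ and $sy+(1-s)\frac{c+d}{2}$ must remain in $[a,b]$ and $[c,d]$ for all $t,s\in[0,1]$ — which they do, being convex combinations of points of those intervals — so that $H$ is well defined and the Lipschitz hypothesis may legitimately be applied at each pair of points that arises.
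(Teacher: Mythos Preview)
Your proposal is correct and follows essentially the same route as the paper: estimate $|H(t_2,s_2)-H(t_1,s_1)|$ under the integral sign using the Lipschitz hypothesis, evaluate $\frac{1}{b-a}\int_a^b|x-\tfrac{a+b}{2}|\,dx=\tfrac{b-a}{4}$ (and its $y$-analogue) to obtain the Lipschitz bound for $H$, and then specialize $(t_1,s_1)=(0,0)$ and $(t_1,s_1)=(1,1)$ to deduce the two stated inequalities. The only additions you make---explicitly identifying the corner values $H(0,0)$ and $H(1,1)$ and checking that the inner arguments stay in $\Delta$---are implicit in the paper and do not alter the argument.
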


\textit{(i) The mapping }$H$\textit{\ is }$L$\textit{-Lipschitzian on} $%
\left[ 0,1\right] \times \left[ 0,1\right] .$

\textit{(ii) We have the following inequalities}%
\begin{equation}
\left\vert H(t,s)-f\left( \dfrac{a+b}{2},\dfrac{c+d}{2}\right) \right\vert
\leq \frac{L_{1}t}{4}\left( b-a\right) +\frac{L_{2}s}{4}\left( d-c\right)
\label{6}
\end{equation}%
\begin{equation}
\left\vert H(t,s)-\dfrac{1}{\left( b-a\right) \left( d-c\right) }%
\dint_{a}^{b}\dint_{c}^{d}f\left( x,y\right) dydx\right\vert \leq \frac{%
L_{1}(1-t)}{4}\left( b-a\right) +\frac{L_{2}(1-s)}{4}\left( d-c\right) .
\label{7}
\end{equation}

\begin{proof}
\textit{(i)} Let $t_{1},t_{2},s_{1},s_{2}\in \left[ 0,1\right] .$ Then, we
have%
\begin{eqnarray*}
&&\left\vert H(t_{2},s_{2})-H(t_{1},s_{1})\right\vert \\
&& \\
&=&\dfrac{1}{\left( b-a\right) \left( d-c\right) }\left\vert
\dint_{a}^{b}\dint_{c}^{d}f\left( t_{2}x+(1-t_{2})\frac{a+b}{2}%
,s_{2}y+(1-s_{2})\frac{c+d}{2}\right) dydx\right. \\
&& \\
&&\left. -\dint_{a}^{b}\dint_{c}^{d}f\left( t_{1}x+(1-t_{1})\frac{a+b}{2}%
,s_{1}y+(1-s_{1})\frac{c+d}{2}\right) dydx\right\vert \\
&& \\
&\leq &\dfrac{1}{\left( b-a\right) \left( d-c\right) }\dint_{a}^{b}%
\dint_{c}^{d}\left\vert f\left( t_{2}x+(1-t_{2})\frac{a+b}{2}%
,s_{2}y+(1-s_{2})\frac{c+d}{2}\right) \right. \\
&& \\
&&\left. -f\left( t_{1}x+(1-t_{1})\frac{a+b}{2},s_{1}y+(1-s_{1})\frac{c+d}{2}%
\right) dydx\right\vert \\
&& \\
&=&\dfrac{1}{\left( b-a\right) \left( d-c\right) }\dint_{a}^{b}\dint_{c}^{d}%
\left[ L_{1}\left\vert t_{2}-t_{1}\right\vert \left\vert x-\frac{a+b}{2}%
\right\vert +L_{2}\left\vert s_{2}-s_{1}\right\vert \left\vert y-\frac{c+d}{2%
}\right\vert \right] dydx \\
&& \\
&=&\frac{L_{1}\left( b-a\right) }{4}\left\vert t_{2}-t_{1}\right\vert +\frac{%
L_{2}\left( d-c\right) }{4}\left\vert s_{2}-s_{1}\right\vert ,
\end{eqnarray*}%
i.e., for all $t_{1},t_{2},s_{1},s_{2}\in \left[ 0,1\right] ,$%
\begin{equation}
\left\vert H(t_{2},s_{2})-H(t_{1},s_{1})\right\vert \leq \frac{L_{1}\left(
b-a\right) }{4}\left\vert t_{2}-t_{1}\right\vert +\frac{L_{2}\left(
d-c\right) }{4}\left\vert s_{2}-s_{1}\right\vert ,  \label{9}
\end{equation}%
which yields that the mapping $H$ is $L$-Lipschitzian on $\left[ 0,1\right]
\times \left[ 0,1\right] .$

\textit{(ii)} The inequalities (\ref{6}) and (\ref{7}) follow from (\ref{9})
by choosing $t_{1}=0,\ t_{2}=t,\ s_{1}=0,\ s_{2}=s$ and $t_{1}=1,\ t_{2}=t,\
s_{1}=1,\ s_{2}=s,$ respectively.
\end{proof}

Another result which is connected in a sense with the inequality (\ref{11})
is also given in the following:

\begin{theorem}
Under the assumptions Theorem \ref{s}, then we get the following inequality%
\begin{eqnarray}
&&\left\vert \dfrac{f\left( at+(1-t)\frac{a+b}{2},cs+(1-s)\frac{c+d}{2}%
\right) +f\left( at+(1-t)\frac{a+b}{2},ds+(1-s)\frac{c+d}{2}\right) }{4}%
\right.  \notag \\
&&  \notag \\
&&+\dfrac{f\left( bt+(1-t)\frac{a+b}{2},cs+(1-s)\frac{c+d}{2}\right)
+f\left( bt+(1-t)\frac{a+b}{2},ds+(1-s)\frac{c+d}{2}\right) }{4}  \notag \\
&&  \label{10} \\
&&\left. -\frac{1}{(n_{2}-n_{1})(m_{2}-m_{1})}\dint_{n_{1}}^{n_{2}}%
\dint_{m_{1}}^{m_{2}}f(u,w)dwdu\right\vert  \notag \\
&&  \notag \\
&\leq &\dfrac{1}{12}\left( M_{1}\left\vert n_{2}-n_{1}\right\vert
t+M_{2}\left\vert m_{2}-m_{1}\right\vert s\right)  \notag
\end{eqnarray}%
where $M_{1}=\left[ L_{1}+L_{3}+L_{5}+L_{7}\right] $ and $M_{2}=\left[
L_{2}+L_{4}+L_{6}+L_{8}\right] .$
\end{theorem}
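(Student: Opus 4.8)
The plan is to recognise (\ref{10}) as inequality (\ref{11}) of Theorem \ref{z}, applied to $f$ on a rescaled sub-rectangle of $\Delta$; no genuinely new estimate is needed. Fix $t,s\in(0,1]$ (the cases $t=0$ or $s=0$ being obtained by a limiting argument) and set
\[
\alpha:=at+(1-t)\tfrac{a+b}{2},\qquad \beta:=bt+(1-t)\tfrac{a+b}{2},\qquad \gamma:=cs+(1-s)\tfrac{c+d}{2},\qquad \delta:=ds+(1-s)\tfrac{c+d}{2}.
\]
Then the four points at which $f$ is evaluated on the left of (\ref{10}) are exactly the corners $(\alpha,\gamma),(\alpha,\delta),(\beta,\gamma),(\beta,\delta)$ of $R:=[\alpha,\beta]\times[\gamma,\delta]$, and the change of variables $u=tx+(1-t)\tfrac{a+b}{2}$, $w=sy+(1-s)\tfrac{c+d}{2}$ (Jacobian $ts$) shows that the integral term of (\ref{10}) equals $\frac{1}{(\beta-\alpha)(\delta-\gamma)}\int_\alpha^\beta\int_\gamma^\delta f(u,w)\,dw\,du=H(t,s)$, i.e. it is the integral-mean of $f$ over the \emph{same} rectangle $R$. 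Thus (\ref{10}) is precisely the corner-average-versus-integral-mean estimate on $R$.

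First I would observe that $R\subseteq\Delta$: since $t,s\in[0,1]$, $\alpha$ and $\beta$ are convex combinations of $a$ (resp. $b$) and $\tfrac{a+b}{2}$, whence $a\le\alpha\le\tfrac{a+b}{2}\le\beta\le b$, and similarly $c\le\gamma\le\delta\le d$. Hence $f|_R$ is again $L$-Lipschitzian on $R$ with the very same constants $L_1,\dots,L_8$, so the same $M_1,M_2$. I would then invoke (\ref{11}) with $(a,b,c,d)$ replaced by $(\alpha,\beta,\gamma,\delta)$ — equivalently, rerun the proof of Theorem \ref{z} on $R$: take the pointwise bound (\ref{5}) written for $\alpha,\beta,\gamma,\delta$ with running parameters $\tau,\sigma\in[0,1]$ and integrate it over $[0,1]^2$, using the change of variables above together with $\int_0^1\!\!\int_0^1\sigma\tau(1-\tau)\,d\tau\,d\sigma=\int_0^1\!\!\int_0^1\sigma(1-\sigma)(1-\tau)\,d\tau\,d\sigma=\tfrac{1}{12}$. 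This gives
\[
\left|\frac{f(\alpha,\gamma)+f(\alpha,\delta)+f(\beta,\gamma)+f(\beta,\delta)}{4}-\frac{1}{(\beta-\alpha)(\delta-\gamma)}\int_\alpha^\beta\int_\gamma^\delta f(u,w)\,dw\,du\right|\le\frac{1}{12}\bigl(M_1(\beta-\alpha)+M_2(\delta-\gamma)\bigr).
\]
A one-line computation gives $\beta-\alpha=t(b-a)$ and $\delta-\gamma=s(d-c)$, so writing $n_1=\alpha,\ n_2=\beta,\ m_1=\gamma,\ m_2=\delta$ turns this into (\ref{10}), with the understanding that $|n_2-n_1|=t|b-a|$ and $|m_2-m_1|=s|d-c|$.

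I do not foresee any analytic difficulty: the statement is a corollary of (\ref{11}) obtained by applying it to a scaled copy of the rectangle, and the inheritance of the Lipschitz property is immediate. The only points requiring care are bookkeeping: (a) justifying the change of variables that identifies the two forms of the integral, with a limiting reading of both sides when $t=0$ or $s=0$ (there $R$ degenerates but $H(t,s)$ stays well defined); and (b) tracking the weights so that the sharp constant $\tfrac{1}{12}$ — rather than the crude $\tfrac12$ produced by a one-step Lipschitz estimate — comes out, and then matching the resulting bound $\frac{1}{12}(M_1(\beta-\alpha)+M_2(\delta-\gamma))$ against the form written on the right of (\ref{10}). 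Both are routine.
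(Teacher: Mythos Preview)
Your proposal is correct and follows essentially the same approach as the paper: define the sub-rectangle with corners $n_1,n_2,m_1,m_2$ (your $\alpha,\beta,\gamma,\delta$), identify the integral term as $H(t,s)$, and apply inequality (\ref{11}) on that sub-rectangle. You add some extra care the paper omits (the inclusion $R\subseteq\Delta$, the degenerate cases $t=0$ or $s=0$, and the explicit computation $\beta-\alpha=t(b-a)$), but the core argument is identical.
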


\begin{proof}
If we denote $n_{1}=at+(1-t)\frac{a+b}{2},\ n_{2}=bt+(1-t)\frac{a+b}{2},\
m_{1}=cs+(1-s)\frac{c+d}{2}$ and $m_{2}=ds+(1-s)\frac{c+d}{2},$ then, we have%
\begin{equation*}
H(t,s)=\frac{1}{(n_{2}-n_{1})(m_{2}-m_{1})}\dint_{n_{1}}^{n_{2}}%
\dint_{m_{1}}^{m_{2}}f(u,w)dwdu.
\end{equation*}%
Now, using the inequality (\ref{11}) applied for $n_{1},n_{2},m_{1}$ and $%
m_{2},$ we have%
\begin{eqnarray*}
&&\left\vert \dfrac{f\left( n_{1},m_{1}\right) +f\left( n_{1},m_{2}\right)
+f\left( n_{2},m_{1}\right) +f\left( n_{2},m_{2}\right) }{4}\right. \\
&& \\
&&\left. -\frac{1}{(n_{2}-n_{1})(m_{2}-m_{1})}\dint_{n_{1}}^{n_{2}}%
\dint_{m_{1}}^{m_{2}}f(u,w)dwdu\right\vert \\
&& \\
&\leq &\dfrac{1}{12}\left( M_{1}\left\vert n_{2}-n_{1}\right\vert
+M_{2}\left\vert m_{2}-m_{1}\right\vert \right)
\end{eqnarray*}%
from which we have the inequality (\ref{10}). This completes the proof.
\end{proof}

\end{document}